\newtheorem{Th}{Theorem}[section]
\newtheorem{Rem}[Th]{Remark}
\newtheorem{Ex}[Th]{Example}
\newtheorem{Lemma}[Th]{Lemma}
\newtheorem{Def}[Th]{Definition}
\newtheorem{Cor}[Th]{Corollary}
\renewcommand{\section}%
   {\setcounter{equation}{0}\@startsection {section}{1}{\z@}{-3.5ex plus -1ex
  minus -.2ex}{2.3ex plus .2ex}{\Large\bf}}
\def\grad{\mathop{\rm grad}\nolimits}
\def\ds{\displaystyle}
\def\R{\mathbb R}
\def\C{\mathbb C}
\def\N{\mathbb N}
\def\Q{\mathbb Q}
\def\proj{\mathop{\mbox{\rm proj}}}
\def\ind{\mathop{\mbox{\rm ind}}}
\newcommand{\E}{\mathcal{E}}
\newcommand{\A}{\mathcal{A}}
\newcommand{\K}{\mathcal{K}}
\newcommand{\condH}{\mathscr{H}}
\newcommand{\condC}{\mathscr{C}}
\newcommand{\afrac}[2]{\genfrac{}{}{0pt}{1}{#1}{#2}}
\newcommand{\beqsn}{\arraycolsep1.5pt\begin{eqnarray*}}
\newcommand{\eeqsn}{\end{eqnarray*}\arraycolsep5pt}
\newcommand{\beqs}{\arraycolsep1.5pt\begin{eqnarray}}
\newcommand{\eeqs}{\end{eqnarray}\arraycolsep5pt}
\title{Iterates of systems of operators in spaces of 
$\omega$-ultradifferentiable functions}
\author{C.~Boiti$^*$}
\address{
Dipartimento di Matematica e Informatica \\Universit\`a di Ferrara\\
Via Ma\-chia\-vel\-li n.~30\\
I-44121 Ferrara\\
Italy}
\email{chiara.boiti@unife.it}
\author{R.~Cha\"ili$^{**}$}
\address{
University of sciences and technology of Oran\\ 
Algeria}
\email{rachidchaili@gmail.com; rachid.chaili@univ-usto.dz}
\author{T.~Mahrouz$^{**}$}
\address{
University Ibn Khaldoun of Tiaret\\ 
Algeria}
\email{mahrouz78@gmail.com}
\begin{document}

\begin{abstract}
Given two systems $P=(P_j(D))_{j=1}^N$ and $Q=(Q_j(D))_{j=1}^M$ of linear partial
differential operators with constant coefficients, we consider the spaces
$\E_\omega^P$ and $\E_\omega^Q$ of $\omega$-ultra\-dif\-feren\-tia\-ble 
functions with
respect to the iterates of the systems $P$ and $Q$ respectively.
We find necessary and sufficient conditions, on the systems and on the
weights $\omega(t)$ and $\sigma(t)$, for the inclusion
$\E_\omega^P\subseteq\E_\sigma^Q$.
As a consequence we have a generalization of the classical Theorem of the 
Iterates. 
\end{abstract}

\maketitle
\markboth{\sc Iterates of systems of operators \ldots}
{\sc C.~Boiti, R.~Cha\"ili, T.~Mahrouz}

\vspace{3mm}
\noindent{\em Keywords}: Iterates of systems of operators, 
Theorem of the Iterates,
ultradifferentiable functions.

\noindent {\em 2010 Mathematics Subject Classification\,}:
{Primary: 35E20; Secondary: 46E10, 35H99.}

\section{Introduction}

The\let\thefootnote\relax\footnote{${}^*$Partially supported by 
the ``INdAM-GNAMPA Project
2015''.\\
\hspace*{3mm}${}^{**}$Supported by ``Laboratoire d'analyse 
math\'ematique et applications, Universit\'e d'Oran 1, Alg\'erie''.} problem of iterates was first introduced by Komatsu \cite{K1} in the
60's, when he characterized analytic functions $u$ on an open subset 
$\Omega\subseteq\R^n$ in terms of the behaviour of successive iterates 
$P^j(D)u$ for a linear partial differential elliptic operator $P(D)$
with constant coefficients.
He proved that, if $P(D)$ is an elliptic operator of order $m$, 
then a $C^\infty$ function $u$ is 
real analytic in $\Omega$ if and only if for every compact 
$K\subset\subset\Omega$ there is a constant $C>0$ such that
\beqs
\label{I1}
\|P^j(D)u\|_{L^2(K)}\leq C^{j+1}(j!)^m,\qquad
\forall j\in\N_0:=\N\cup\{0\}.
\eeqs
This is known as the Theorem of the Iterates.

Moreover, the condition that $P(D)$ is elliptic is sufficient and also
necessary (cf. \cite{M}, \cite{LW}) for the above mentioned result, so that, 
given a linear partial differential operator $P(D)$ of order $m$ with
constant coefficients, the ellipticity growth condition
\beqs
\label{I2}
|\xi|^{2m}\leq C(1+|P(\xi)|^2),\qquad
\forall\xi\in\R^n,
\eeqs
for a constant $C>0$, is equivalent to the equality
\beqsn
\A(\Omega)=\A^P(\Omega),
\eeqsn
where $\A(\Omega)$ is the space of real analytic functions on $\Omega$ and
$\A^P(\Omega)$ is the space of real analytic functions on $\Omega$ with
respect to the iterates of $P$, i.e. the space of $C^\infty$ functions $u$ on
$\Omega$ satisfying \eqref{I1}.

This problem was generalized by Newberger and Zielezny \cite{NZ} to the
class of Gevrey functions proving, more in general, that, for a
pair of hypoelliptic linear partial differential operators $P(D)$ and $Q(D)$ 
with constant coefficients, of order $m$ and $r$ respectively, the condition that
\beqs
\label{I4}
|Q(\xi)|^2\leq C(1+|P(\xi)|^2)^h,
\qquad\forall\xi\in\R^n,
\eeqs
for some $h>0$, is equivalent to an inclusion of the form
\beqsn
\E_{\{t^{\frac1s}\}}^P(\Omega)\subseteq\E_{\{t^{\frac{r}{smh}}\}}^Q(\Omega)
\eeqsn
if $s$ is large enough, where $\E_{\{t^{1/s}\}}^P(\Omega)$ is the space of 
Gevrey functions 
of order $s$ with respect to the iterates of $P=P(D)$, as defined is 
\eqref{EPR} for a
Gevrey weight $\omega(t)=t^{1/s}$.

This result was generalized to the class of $\omega$-ultradifferentiable
functions in the sense of \cite{BMT} by \cite{J}, and was considered in the 
case of systems of operators in 
the Gevrey setting by \cite{BC}.
Here we implement both papers \cite{J} and \cite{BC}, considering the case
of systems in the spaces of $\omega$-ultradifferentiable functions.

In Section \ref{sec1} we define the spaces of $\omega$-ultradifferentiable
functions $\E_\omega^P(\Omega)$ with respect to the iterates of the system
$P=(P_j(D))_{j=1}^N$, both in the Beurling and in the Roumieu setting.

In Section \ref{sec2} we prove that, given two systems $P=(P_j(D))_{j=1}^N$
and $Q=(Q_j(D))_{j=1}^M$ of order $m$ and $r$ respectively, the condition
\beqsn
\sum_{j=1}^M|Q_j(\xi)|\leq C\bigg(1+\sum_{j=1}^N|P_j(\xi)|\bigg)^h,
\qquad\forall\xi\in\R^n,
\eeqsn
is necessary and sufficient for an inclusion of the form
\beqsn
\E^P_{\omega'}(\Omega)\subseteq\E_{\sigma'}^Q(\Omega),
\eeqsn
under assumptions weaker than hypoellipticity (condition $(\condH)$ for
the sufficiency in Theorem~\ref{th1} and condition $(\condC)$ for the
necessity in Theorem~\ref{th2}),
where $\sigma'(t)=\omega'(t^{\frac{r}{mh}})$
with $\omega'(t)=\omega(t^{1/s})$ and $s$ large enough, both in the
Beurling and in the Roumieu setting, for a non-quasianalytic weight 
$\omega$.

In particular, if $P=(P_j(D))_{j=1}^N$ is an elliptic system,
we obtain the Theorem of the Iterates (see Corollary~\ref{cor1}), i.e.
\beqsn
\E^P_{\omega'}(\Omega)=\E_{\omega'}(\Omega).
\eeqsn
Moreover, we prove that the ellipticity of the system $P$ is also
necessary (see Corollary~\ref{cor2}).

In Example~\ref{exA} we have an application of the above results.

Let us finally recall that the Theorem of the Iterates has
also been generalized to the case of variable coefficients, for a
single elliptic operator $P(x,D)$.
It has been proved in the class of real analytic functions by
Kotake and Narasimhan \cite{KN}, in the case of Denjoy-Carleman classes
of Roumieu type by Lions and Magenes \cite{LM} and of Beurling type
with some loss of regularity with respect to the coefficients by 
Oldrich \cite{O}, in the classes of $\omega$-ultradifferentiable functions
of Roumieu type, or of Beurling type but with some loss of regularity with
respect to the coefficients, by Boiti and Jornet \cite{BJ}.

For a microlocal version of the Theorem of the Iterates see, for instance,
\cite{BCM}, \cite{BJJ}, \cite{BJ2}, \cite{BJ3}.
For anisotropic Gevrey classes we refer to \cite{Z}, \cite{BC2}.

\section{Spaces of $\omega$-ultradifferentiable functions with respect
to the iterates of a system of operators}
\label{sec1}

Let us first recall, from \cite{BMT}, the notion of weight functions and 
of spaces of $\omega$-ultra\-dif\-fe\-ren\-tiable functions of Beurling and 
Roumieu 
type:
\begin{Def}
\label{def1}
A {\em non-quasianalytic weight function} is a continuous increasing
function $\omega:\ [0,+\infty)\to[0,+\infty)$ with the following properties:
\begin{itemize}
\item[$(\alpha)$]
$\exists L>0$ s.t. \ $\omega(2t)\leq L(\omega(t)+1)\quad\forall t\geq0$;
\item[$(\beta)$]
$\int_1^{+\infty}\frac{\omega(t)}{t^2}dt<+\infty$;
\item[$(\gamma)$]
$\log t=o(\omega(t))$ as $t\to+\infty$;
\item[$(\delta)$]
$\varphi_\omega(t):=\omega(e^t)$ is convex.
\end{itemize}
For $z\in\C^n$ we write $\omega(z)$ for $\omega(|z|)$, where
$|z|=\sum_{j=1}^n|z_j|$.
We write $\varphi$ for $\varphi_\omega$ when it is clear from the context.
\end{Def}

\begin{Rem}
\begin{em}
Condition $(\beta)$ is the condition of non-quasiananlyticity and it will
ensure the existence of $\omega$-ultradifferentiable functions with 
compact support.

In the Beurling setting, condition $(\gamma)$ may be weakened (cf. \cite{BG}),
by the following:
\beqsn
\hspace*{-5cm}
(\gamma)'
\quad\exists a\in\R,\ b>0:\ \omega(t)\geq a+b\log (1+t),
\quad\forall t\geq0.
\eeqsn
\end{em}
\end{Rem}

The {\em Young conjugate} $\varphi^*$ of $\varphi$ is defined by
\beqsn
\varphi^*(s):=\sup_{t\geq0}\{st-\varphi(t)\},\qquad s\geq0.
\eeqsn
Assuming, without any loss of generality, that $\omega$ vanishes on
$[0,1]$, we have that $\varphi^*$ has only non-negative values, it is convex 
and increasing, $\varphi^*(0)=0$, $\varphi^*(s)/s$ is increasing and
$(\varphi^*)^*=\varphi$ (cf. \cite{BMT}).

An easy computation shows that, for every $a>0$:
\beqs
\label{lemma23}
\sigma(t)=\omega(t^a)\qquad\Rightarrow\qquad \varphi^*_\sigma(s)
=\varphi^*_\omega(s/a).
\eeqs

For a compact set $K\subset\R^n$ and $\lambda>0$ we consider the semi-norm
\beqsn
p_{K,\lambda}(u)=\sup_{\alpha\in\N_0^n}\sup_{x\in K}
|D^\alpha u(x)|e^{-\lambda\varphi^*\left(\frac{|\alpha|}{\lambda}\right)}.
\eeqsn
Then
\beqsn
\E_{\omega,\lambda}(K):=\{u\in C^\infty(K):\ p_{K,\lambda}(u)<+\infty\}
\eeqsn
is a Banach space endowed with the norm $p_{K,\lambda}$.

Let us then recall from \cite{BMT} the definition of the space of
{\em $\omega$-ultradifferentiable functions of Beurling type} in an open
set $\Omega\subseteq\R^n$:
\beqsn
\E_{(\omega)}(\Omega):=\proj_{\afrac{\longleftarrow}{K\subset\subset\Omega}}
\proj_{\afrac{\longleftarrow}{\lambda>0}}
\E_{\omega,\lambda}(K).
\eeqsn
This is a Fr\'echet space.

The space of {\em $\omega$-ultradifferentiable functions of Roumieu type}
is defined by
\beqsn
\E_{\{\omega\}}(\Omega):=\proj_{\afrac{\longleftarrow}{K\subset\subset\Omega}}
\ind_{\afrac{\longrightarrow}{m\in\N}}\E_{\omega,\frac1m}(K).
\eeqsn

Let us now consider a system $P=(P_j(D))_{j=1}^N$ of linear partial differential
operators with constant coefficients. For $\beta\in\N_0^N$ we define the 
iterates of the system $P$ as
\beqsn
P^\beta:=P_1^{\beta_1}(D)\circ P_2^{\beta_2}(D)\circ\cdots\circ P_N^{\beta_N}(D),
\eeqsn
where $P_j^{\beta_j}(D)$ is the $\beta_j$-th iterate of the operator $P_j(D)$,
i.e. 
\beqsn
P_j^{\beta_j}(D)=\underbrace{P_j(D)\circ\cdots\circ P_j(D)}_{\beta_j},
\eeqsn
and $P^0(D)u=u$.

We shall say, in the following, that the system $P=(P_j(D))_{j=1}^N$ 
has order $m$
if each operator $P_j(D)$ has order $m$. In this case, for a compact
$K\subset\R^n$ and $\lambda>0$ we consider the semi-norm
\beqsn
p_{K,\lambda}^P(u):=\sup_{\beta\in\N_0^N}\|P^\beta u\|_{L^2(K)}
e^{-\lambda\varphi^*\left(\frac{|\beta|m}{\lambda}\right)}
\eeqsn
and define
\beqs
\label{1}
\E^P_{\omega,\lambda}(K):=\{u\in C^\infty(K):\ p^P_{K,\lambda}(u)<+\infty\}.
\eeqs

For an open set $\Omega\subseteq\R^n$ we define the space of
{\em $\omega$-ultradifferentiable functions of Beurling type with respect
to the iterates of the system} $P=(P_j(D))_{j=1}^N$ by:
\beqs
\label{EPB}
\E^P_{(\omega)}(\Omega):=\proj_{\afrac{\longleftarrow}{K\subset\subset\Omega}}
\proj_{\afrac{\longleftarrow}{\lambda>0}}
\E^P_{\omega,\lambda}(K).
\eeqs

Analogously, we define the
space of {\em $\omega$-ultradifferentiable functions of Roumieu type 
with respect to the iterates of the system} $P$ by:
\beqs
\label{EPR}
\E^P_{\{\omega\}}(\Omega):=\proj_{\afrac{\longleftarrow}{K\subset\subset\Omega}}
\ind_{\afrac{\longrightarrow}{\ell\in\N}}\E^P_{\omega,\frac1\ell}(K).
\eeqs

\vspace{1mm}
\noindent
{\bf Notation.} In the following we shall write $\E_\omega^P(\Omega)$ if the
statement holds both in the Beurling case $\E_{(\omega)}^P(\Omega)$ and in the
Roumieu case $\E_{\{\omega\}}^P(\Omega)$.

\begin{Rem}
\begin{em}
  When the system is given by a single operator $P=P(D)$, the above defined
  spaces $\E_\omega^P(\Omega)$ coincide with the corresponding ones defined
  in \cite{BJJ} (see \cite{J} for the original, slightly different,
  definition).
  \end{em}
\end{Rem}

Analogously as in \cite{BC}, we give the following:
\begin{Def}
  \label{defC}
  We say that the system $P=(P_j(D))_{j=1}^N$ satisfies condition $(\condC)$ if
  for every $\lambda>0$ and $K\subset\subset\Omega$ the space
  $\E_{\omega,\lambda}^P(K)$ defined in \eqref{1} is a Banach space endowed with 
the norm $p_{K,\lambda}^P$.
    \end{Def}

Let $\{K_\ell\}_{\ell\in\N}$ be a compact exhaustion of $\Omega$, i.e. a
 sequence of compact subsets of $\Omega$ with
$K\subset{\buildrel\circ\over{K}}_{\ell+1}$ and $\cup_\ell K_\ell=\Omega$. We 
have that
\beqs
\label{2F}
\E_{(\omega)}^P(\Omega)=\proj_{\afrac{\longleftarrow}{\ell\in\N}}
\proj_{\afrac{\longleftarrow}{m\in\N}}\E_{\omega,m}^P(K_\ell)
=\proj_{\afrac{\longleftarrow}{\ell\in\N}}\E_{\omega,\ell}^P(K_\ell).
\eeqs

\begin{Rem}
\label{rem210bis}
\begin{em}
If condition $(\condC)$ is satisfied, then $\E_{(\omega)}^P(\Omega)$, endowed
with the metrizable local convex topology defined by the fundamental
system of semi-norms $\{p^P_{K_\ell,\ell}\}_{\ell\in\N}$, is a Fr\'echet space.
On the contrary, condition $(\condC)$ does not garantee that 
$E^P_{\{\omega\}}(\Omega)$
is complete.

However, if $P=(P_j(D))_{j=1}^N$ is a system of hypoelliptic operators, then 
it can be proved,
as in \cite[Thm. 3.3]{J}, that both $\E^P_{(\omega)}(\Omega)$ and
$\E^P_{\{\omega\}}(\Omega)$ are complete.

In the case of a single operator $P=P(D)$ it was 
proved in \cite[Prop. 3.1]{J} that
also the converse is valid: if $\E_{\omega}^P(\Omega)$ is complete,
then $P(D)$ must be hypoelliptic.
This is not true in the case of systems. Take, for instance,
$P=(D_j)_{j=1}^n$ for $D_j=-i\partial_{x_j}$. Then 
$\E_{\omega}^P(\Omega)=\E_{\omega}(\Omega)$ is complete by 
\cite[Prop. 4.9]{BMT}, but the operators $P_j(D)=D_j$ are not hypoelliptic.
\end{em}
\end{Rem}

\begin{Rem}
\label{rem210tris}
\begin{em}
It is possible to construct a finer locally convex topology that makes
$\E_\omega^P(\Omega)$ always complete, without any assumption on the operators.

In the Beurling case we take a compact exhaustion $\{K_\ell\}_{\ell\in\N}$ of
$\Omega$, set
\beqsn
p_\ell(u):=\sup_{|\alpha|\leq \ell}\sup_{x\in K_\ell}|D^\alpha u(x)|
\eeqsn
and then consider the semi-norm
\beqsn
\tau_\ell^P(u):=\max\left\{p_{K_\ell,\ell}^P(u),p_\ell(u)\right\}.
\eeqsn

We have that $\E_{(\omega)}^P(\Omega)$, endowed with the convex topology
defined by the fundamental system of semi-norms $\{\tau^P_\ell\}_{\ell\in\N}$, is a
Fr\'echet space. The proof is standard.

In the Roumieu case we consider, for
$\ell\in\N$ and $K\subset\subset\Omega$, the fundamental system of
semi-norms $\{\tau_{K,\ell,m}^P\}_{m\in\N}$ defined by
\beqs
\label{defsemi}
\tau_{K,\ell,m}^P(u):=\max\{p_{K,\frac1\ell}^P(u),p_m(u)\}.
\eeqs
This makes $\E_{\omega,\frac1\ell}^P(K)$ a Fr\'echet space.
Considering then on $\E_{\{\omega\}}^P(\Omega)$ the topology induced by
\eqref{EPR}, we can prove, as in \cite[Prop. 3.5]{J}, that 
$\E_{\{\omega\}}^P(\Omega)$ is complete.
\end{em}
\end{Rem}

  We now want to look for sufficient and necessary conditions in order to
  obtain the Theorem of the Iterates for systems $P=(P_j(D))_{j=1}^N$
  of linear partial differential operators with constant coefficients
  in the classes of $\omega$-ultradifferentiable functions.
  
\section{A sufficient condition}
\label{sec2}

Analogously as in \cite{BC}, we give the following:
\begin{Def}
\label{defH}
Let $P=(P_j(D))_{j=1}^N$ be a system of linear partial differential 
operators with constant coefficients of order $m$. We say that $P$ satisfies
{\em condition} $(\condH)$ if there exist $C>0$ and $\gamma\geq m$
such that
\beqs
\label{eqH}
\sum_{j=1}^N|P^{(\alpha)}_j(\xi)|\leq C\bigg(1+\sum_{j=1}^N|P_j(\xi)|
\bigg)^{1-\frac{|\alpha|}{\gamma}},\qquad\forall\alpha\in\N_0^n,\xi\in\R^n,
\eeqs
where $P_j^{(\alpha)}(\xi)=\partial_\xi^\alpha P_j(\xi)$.
\end{Def}

\begin{Rem}
\label{remlemma32}
\begin{em}
If the system $P=(P_j(D))_{j=1}^N$ satisfies condition $(\condH)$ for
some $\gamma\geq m$, there exists a smallest $\gamma_P\geq m$ such that
$P$ satisfies \eqref{eqH} for $\gamma=\gamma_P$; moreover $\gamma_P\in\Q$.
Indeed, the inequality \eqref{eqH} implies that there exists $C'>0$ such that
\beqs
\label{eqgrad}
|\grad P_i(\xi)|\leq C'\bigg(1+\sum_{j=1}^N|P_j(\xi)|\bigg)^{1-\frac1\gamma},
\qquad\forall i=1,\ldots,N.
\eeqs
Applying then the Tarski-Seidenberg theorem to the semi-algebraic function
\beqsn
M_i(\lambda)=\sup_{\sum_{j=1}^N|P_j(\xi)|=\lambda}|\grad P_i(\xi)|,
\eeqsn
we can argue as in \cite[Thm. 3.1]{H} to prove 
that for every $i\in\{1,\ldots,N\}$ there exists a smallest $\gamma_i$
such that
\beqs
\label{9}
|P_i^{(\alpha)}(\xi)|\leq C\bigg(1+\sum_{j=1}^N
|P_j(\xi)|\bigg)^{1-\frac{|\alpha|}{\gamma_i}},
\qquad\forall\alpha\in\N_0^n,\xi\in\R^n.
\eeqs
Then $\gamma_P:=\max\{\gamma_1,\ldots,\gamma_N\}$ is the smallest
$\gamma$ satisfying \eqref{eqH} and moreover 
$\gamma_P\in\Q$ and $\gamma_P\geq m$.
\end{em}
\end{Rem}

In the following, for a system $P$ satisfying condition $(\condH)$, 
we shall always
refer to $\gamma_P$ as defined in Remark~\ref{remlemma32}.

\begin{Rem}
\label{rem1}
\begin{em}
If $P=P(D)$ is a hypoelliptic operator, then
conditon $(\condH)$ is satisfied because of \cite[Thm. 3.1]{H}.
However, in general condition $(\condH)$ is weaker than hypoellipticity.
Take for instance in $\R^2$ the operator $P(D)=P(D_1,D_2)=D_1^2$. It is 
trivially not hypoelliptic,
but it satisfies condition $(\condH)$ for $\gamma=2$.

More in general, if $P=(P_j(D))_{j=1}^N$ is a system of hypoelliptic
operators, then $P$ satisfies condition $(\condH)$.
If the system $P$ is elliptic, i.e.
\beqs
\label{ellittico}
|\xi|^m\leq C\bigg(1+\sum_{j=1}^N|P_j(\xi)|\bigg),\qquad\forall\xi\in\R^n,
\eeqs
then condition $(\condH)$ is
satisfied for $\gamma_P=m$.
\end{em}
\end{Rem}

In order to compare, for two given systems $P=(P_j(D))_{j=1}^N$
and $Q=(Q_j(D))_{j=1}^M$, the corresponding spaces $\E_\omega^P(\Omega)$ and
$\E_\omega^Q(\Omega)$, we introduce the following:
\begin{Def}
\label{defhweaker}
Let $P=(P_j(D))_{j=1}^N$ and $Q=(Q_j(D))_{j=1}^M$ be two systems of linear 
partial differential operators with constant coefficients. If there exist
$C,h>0$ such that
\beqs
\label{eqh}
\sum_{j=1}^M|Q_j(\xi)|\leq C\bigg(1+\sum_{j=1}^N|P_j(\xi)|\bigg)^h,
\qquad\forall\xi\in\R^n,
\eeqs
we say that $Q$ is {\em $h$-weaker} than $P$ and we write 
$Q\prec_h P$.
\end{Def}

\begin{Rem}
\label{remh}
\begin{em}
If $P=P(D)$ and $Q=Q(D)$ are single operators and $P(D)$ is hypoelliptic, 
then by \cite[Thm. 3.2]{H} there is a smallest $h$ such that $Q$ is 
$h$-weaker than $P$, and
moreover $h\in\Q$.

More in general, if $P=(P_j(D))_{j=1}^N$ is $h$-weaker than $Q=(Q_j(D))_{j=1}^M$,
there exists a smallest $h>0$ such that \eqref{eqh} is satisfied and moreover 
$h\in\Q$.
Indeed, we can argue as in \cite[Thm. 3.2]{H} and Remark~\ref{remlemma32},
 taking the semi-algebraic functions
\beqsn
M_i(\lambda)=\sup_{\sum_{j=1}^N|P_j(\xi)|=\lambda}|Q_i(\xi)|.
\eeqsn
\end{em}
\end{Rem}

\begin{Def}
If $P=(P_j(D))_{j=1}^N$ and $Q=(Q_j(D))_{j=1}^M$ are two systems with
$P\prec_h Q$ and $Q\prec_h P$,  we say that the systems $P$ and $Q$ are 
{\em $h$-equally strong},
and we write $P\approx_h Q$.
\end{Def}

\begin{Rem}
\label{remlemma38}
\begin{em}
Arguing as in \cite[pg 210]{H}, we can easily prove that if
$P=(P_j(D))_{j=1}^N$ and $Q=(Q_j(D))_{j=1}^M$ are two systems of order $m$
and $r$ respectively, satisfying condition $(\condH)$ and 1-equally strong, then $m=r$ 
and $\gamma_P=\gamma_Q$.
\end{em}
\end{Rem}

We are now ready to prove the following result:
\begin{Th}
\label{th1}
Let $P=(P_j(D))_{j=1}^N$ and $Q=(Q_j(D))_{j=1}^M$ be two systems of linear 
partial differential operators with constant coefficients, of order $m$ and 
$r$ respectively.
Assume that $P$ and $Q$ satisfy condition $(\condH)$ of Definition~\ref{defH} and that 
$Q$ is $h$-weaker than $P$. 
Let $\Omega$ be an open subset of $\R^n$.
Let $\omega$ be a non-quasianalytic weight function and set 
$\omega'(t)=\omega(t^{1/s})$ for $s\geq\gamma_P/m$.
Then
\beqs
\label{B1}
\E_{(\omega')}^P(\Omega)\subseteq&&\E_{(\sigma')}^Q(\Omega)\\
\label{R1}
\E_{\{\omega'\}}^P(\Omega)\subseteq&&\E_{\{\sigma'\}}^Q(\Omega),
\eeqs
for $\sigma'(t)=\omega'(t^{\frac{r}{mh}})=\omega(t^{\frac{r}{smh}})$.
\end{Th}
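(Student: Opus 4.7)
The plan is to follow the strategy of \cite{J} and \cite{BC}. Fix $u\in\E^P_{\omega'}(\Omega)$ and compacts $K\subset\subset K'\subset\subset\Omega$; the goal is to bound $\|Q^\beta u\|_{L^2(K)}$ by a suitable weighted combination of the $\|P^\delta u\|_{L^2(K')}$. Choose a cutoff $\chi\in\D_{(\omega')}(K')$ which is $\equiv 1$ on a neighbourhood of $K$ (available since $\omega$ is non-quasianalytic). Since the $Q_j(D)$ have constant coefficients, $Q^\beta u=Q^\beta(\chi u)$ on $K$, so by Plancherel
\[
\|Q^\beta u\|_{L^2(K)}\leq\|Q^\beta(\xi)\,\widehat{\chi u}(\xi)\|_{L^2_\xi},\qquad Q^\beta(\xi):=\prod_{j=1}^M Q_j(\xi)^{\beta_j}.
\]

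On the Fourier side, the assumption $Q\prec_h P$ gives pointwise
\[
|Q^\beta(\xi)|\leq\Big(\sum_{j=1}^M|Q_j(\xi)|\Big)^{|\beta|}\leq C^{|\beta|}\Big(1+\sum_{j=1}^N|P_j(\xi)|\Big)^{h|\beta|}.
\]
Setting $N_\beta:=\lceil h|\beta|\rceil\in\N$ and multinomially expanding (using $1+\sum_j|P_j|\geq 1$) majorizes the right-hand side by a sum of symbols $|P^\gamma(\xi)|$ with $\gamma\in\N_0^N$, $|\gamma|\leq N_\beta$, with combinatorial coefficients of $\omega'$-controlled size. Reversing Plancherel reduces the estimate to controlling $\|P^\gamma(\chi u)\|_{L^2}$ for each such $\gamma$.

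The core step is the cutoff-commutator estimate: by induction on $|\gamma|$ I would establish
\[
\|P^\gamma(\chi u)\|_{L^2}\leq C_1^{|\gamma|+1}\,p^P_{K',\lambda}(u)\,e^{\lambda\varphi^*_{\omega'}(m|\gamma|/\lambda)},
\]
with $C_1$ depending on $\chi$ and $K'$ only. The inductive step uses the constant-coefficient Leibniz formula
\[
P_j(D)(\chi v)=\sum_\alpha\tfrac{1}{\alpha!}(D^\alpha\chi)\,P_j^{(\alpha)}(D)v
\]
together with condition $(\condH)$ for $P$: the bound $|P_j^{(\alpha)}(\xi)|\leq C(1+\sum|P_j|)^{1-|\alpha|/\gamma_P}$ lets me re-absorb the lower-order symbols $P_j^{(\alpha)}$ into (possibly rational) powers of $1+\sum|P_j|$, at the cost of an exponent loss $|\alpha|/\gamma_P$. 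Each derivative of $\chi$ contributes a factor of ultradifferentiable type coming from $\chi\in\D_{(\omega')}$, and the hypothesis $s\geq\gamma_P/m$ is precisely what allows the $\gamma_P/m$ mismatch between derivative order and $P$-iterate order to be absorbed into the $\omega'$-weight. Carrying out this bookkeeping cleanly is the main technical obstacle; it is essentially a system-version of H\"ormander's hypoelliptic estimate, adapted from \cite{BC} and \cite{J}.

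Combining the two reductions, inserting $|\gamma|\leq N_\beta\leq h|\beta|+1$, and using the convexity and monotonicity of $\varphi^*_{\omega'}(s)/s$ to collapse the products of exponential factors, one arrives at
\[
\|Q^\beta u\|_{L^2(K)}\leq C_2(\lambda)\,p^P_{K',\lambda}(u)\,e^{\lambda\varphi^*_{\omega'}(mh|\beta|/\lambda)}.
\]
Finally \eqref{lemma23}, applied with $a=r/(mh)$, yields $\varphi^*_{\omega'}(mh|\beta|/\lambda)=\varphi^*_{\sigma'}(r|\beta|/\lambda)$, so that $p^Q_{K,\lambda}(u)<+\infty$. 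Since $\lambda>0$ was arbitrary this gives \eqref{B1}; keeping $\lambda=1/\ell$ fixed throughout, the same computation yields \eqref{R1}.
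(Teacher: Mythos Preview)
Your overall architecture is reasonable and the final conversion $\varphi^*_{\omega'}(mh|\beta|/\lambda)=\varphi^*_{\sigma'}(r|\beta|/\lambda)$ is correct, but the proof as written has a genuine gap at the step you yourself flag as ``the main technical obstacle'', and the mechanism you sketch there does not close.

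The issue is your cutoff--commutator estimate. After the iterated Leibniz expansion, the terms have the form $(D^{A}\chi)\cdot R(D)u$ with $R=\prod_i P_{j_i}^{(\alpha_i)}$ and $|A|=\sum_i|\alpha_i|$. Condition $(\condH)$ gives the \emph{symbol} bound $|R(\xi)|\le C^{|\gamma|}\big(1+\sum_j|P_j(\xi)|\big)^{|\gamma|-|A|/\gamma_P}$, but the exponent is non-integer, so you cannot ``reverse Plancherel'' back to quantities $\|P^{\delta}u\|_{L^2(K')}$ by a multinomial expansion. Rounding the exponent up to $|\gamma|$ discards exactly the $|A|/\gamma_P$ gain you need to pay for $D^{A}\chi$, and introducing a second cutoff to localize $R(D)u$ just restarts the same Leibniz problem, producing an infinite regress. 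In short, a single fixed $\chi\in\D_{(\omega')}(K')$ is not enough structure to turn the fractional-power symbol inequality $(\condH)$ into an $L^2$ estimate on bounded domains.

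The paper avoids this entirely. It does \emph{not} use Fourier transform or a cutoff; instead it quotes the ready-made physical-space inequality \cite[formula (2.12)]{BC}, which is proved by a H\"ormander-type iteration on a chain of nested compacts $F_{(M+1)\delta}\subset\cdots\subset F$. That inequality already delivers
\[
\|Q^{\alpha}u\|_{L^2(F_{(M+1)\delta})}\le C_1^{M}\sum_{i=0}^{M}\binom{M}{i}M^{i}C_2^{k+i}\sum_{|\beta|\le k+i}\binom{k+i}{|\beta|}\Big(\tfrac{k+i}{\delta}\Big)^{(k+i-|\beta|)\gamma_P\mu}\|P^{\beta\mu}u\|_{L^2(F)},
\]
with $h=\mu/\nu$ rational and $k=\sum_j\lfloor\alpha_j/\nu\rfloor$; the role of your ``$|A|/\gamma_P$ gain'' is played here by the explicit powers of $(k+i)/\delta$, and the hypothesis $s\ge\gamma_P/m$ enters precisely to absorb the factor $k^{(k-\ell)\gamma_P\mu}\le k^{(k-\ell)sm\mu}$ into the $\omega$-weights via $y\log y\le y+q'\varphi^*_\omega(y/q')+C_{q'}$. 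If you want to salvage your approach, you would need to reproduce an estimate of this nested-compact type; the fixed-cutoff Leibniz argument does not provide it.
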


\begin{proof}
\underline{Beurling case:}

Let $u\in\E_{(\omega')}^P(\Omega)$. For every compact $K\subset\Omega$
there exist an open set $F$ relatively compact in $\Omega$ and $\delta>0$
such that
\beqsn
K\subset F_{(M+1)\delta}\subset F\subset\Omega,
\eeqsn
where
\beqsn
F_\sigma:=\{x\in F:\ d(x,\partial F)>\sigma\}.
\eeqsn

Moreover, for every $q\in\N$ there exists $C_q>0$ such that
\beqs
\label{15}
\sum_{|\beta|=\ell}\|P^\beta u\|_{L^2(F)}\leq C_q
e^{q\varphi^*_{\omega'}\left(\frac{\ell m}{q}\right)}
=C_qe^{q\varphi^*_{\omega}\left(\frac{\ell ms}{q}\right)},
\qquad\forall\ell\in\N,
\eeqs
by the definition of $\E_{(\omega')}^P(\Omega)$ and by \eqref{lemma23}.

By assumption $Q\prec_h P$ and, by Remark~\ref{remh}, 
there exists $\mu,\nu\in\N$
such that $h=\mu/\nu$. 

Arguing as in \cite[Thm. 2.4]{BC}, we fix $\alpha\in\N_0^M$,
choose $k_j,\ell_j\in\N_0$ such that $\alpha_j=k_j\nu+\ell_j$, with $l_j\leq\nu-1$, for 
$1\leq j\leq M$, and set $k=\sum_{j=1}^M k_j$.
From \cite[formula (2.12)]{BC} there exist $C_1,C_2>0$ such that for every
$u\in C^\infty(F)$:
\beqs
\nonumber
\|Q^\alpha u\|_{L^2(F_{(M+1)\delta})}\leq C_1^M\bigg[&&\sum_{i=0}^M
\binom Mi M^i C_2^{k+i}\sum_{|\beta|\leq k+i}\binom{k+i}{|\beta|}\\
\label{212}
&&\cdot\left(\frac{k+i}{\delta}\right)^{(k+i-|\beta|)\gamma_P\mu}
\|P^{\beta\mu}u\|_{L^2(F)}\bigg].
\eeqs

If $\gamma_P\leq sm$, then from \eqref{15} we have, for all $\ell\leq k$:
\beqs
\nonumber
&&k^{(k-\ell)\gamma_P\mu}\sum_{|\beta|=\ell}\|P^{\beta\mu}u\|_{L^2(F)}
\leq C_qk^{(k-\ell)sm\mu}e^{q\varphi^*_{\omega}\left(\frac{m\ell \mu s}{q}\right)}\\
\nonumber
&&\leq C_q\left(1+\frac{\ell}{k-\ell}\right)^{\frac{k-\ell}{\ell}sm\mu\ell}
(k-\ell)^{(k-\ell)sm\mu}e^{q\varphi^*_{\omega}\left(\frac{m\ell \mu s}{q}\right)}\\
\label{16}
&&\leq C_qe^{sm\mu\ell}[(k-\ell)sm\mu]^{(k-\ell)sm\mu}
e^{q\varphi^*_{\omega}\left(\frac{m\ell \mu s}{q}\right)}.
\eeqs

Since $\omega(t)$ is a non-quasianalytic weight function, condition
$(\beta)$ implies $\omega(t)=o(t)$ and hence for every $q'\in\N$ there 
exists $C_{q'}>0$ such that from \cite[Rem. 2.4]{AJO}:
\beqs
y\log y\leq y+q'\varphi^*_\omega\left(\frac{y}{q'}\right)+C_{q'},
\qquad\forall y>0.
\eeqs

Applying the above inequality to \eqref{16} we have that
\beqs
\label{17}
k^{(k-\ell)\gamma_P\mu}\sum_{|\beta|=\ell}\|P^{\beta\mu}u\|_{L^2(F)}
\leq C_q e^{sm\mu\ell}e^{(k-\ell)sm\mu}
e^{q'\varphi^*_{\omega}\left(\frac{(k-\ell)m \mu s}{q'}\right)}
e^{C_{q'}}e^{q\varphi^*_{\omega}\left(\frac{m\ell \mu s}{q}\right)}.
\eeqs

By condition $(\alpha)$ of Definition~\ref{def1} there exists $\tilde{L}>0$
such that
\beqsn
\omega(et)\leq\tilde{L}(1+\omega(t)),\qquad\forall t\geq0.
\eeqsn
Then, from \cite[Prop. 21(e) and Rem. 22]{BJ} we have that for every 
$\rho,\lambda>0$ there exists $\lambda',D_{\rho,\lambda}>0$ such that
\beqs
\label{la5}
\rho^je^{\lambda\varphi^*_\omega(j/\lambda)}\leq D_{\rho,\lambda}
e^{\lambda'\varphi^*_\omega(j/\lambda')},\qquad\forall j\in\N_0,
\eeqs
with $\lambda'=\lambda/\tilde{L}^{[\log\rho+1]}$ and
$D_{\rho,\lambda}=\exp\{\lambda[\log\rho+1]\}$, where $[\log\rho+1]$ is the
integer part of $(\log\rho+1)$.

Applying \eqref{la5} in \eqref{17} we have that 
for every $\lambda>0$ there exists $C_\lambda>0$ such that
\beqs
\label{18}
k^{(k-\ell)\gamma_P\mu}\sum_{|\beta|=\ell}\|P^{\beta\mu}u\|_{L^2(F)}
\leq C_{\lambda}e^{\lambda\varphi^*_{\omega}\left(\frac{m\ell \mu s}{\lambda}\right)}
e^{\lambda \varphi^*_{\omega}\left(\frac{(k-\ell)m \mu s}{\lambda}\right)}.
\eeqs

From condition $(\alpha)$ of Definition~\ref{def1}, by \cite[Lemma 1.2]{BMT}
we have that there exists $L'>0$ such that
\beqsn
\omega(u+v)\leq L'(\omega(u)+\omega(v)+1),\qquad
\forall u,v\geq0,
\eeqsn
and hence for all $j,k\in\N_0,\lambda>0$:
\beqs
\nonumber
e^{\lambda\varphi^*\left(\frac j\lambda\right)+
\lambda\varphi^*\left(\frac k\lambda\right)}=&&
\sup_{s\geq0}e^{js-\lambda\varphi_\omega(s)}\cdot
\sup_{t\geq0}e^{kt-\lambda\varphi_\omega(t)}
=\sup_{u,v\geq1}e^{j\log u+k\log v-\lambda(\omega(u)+\omega(v))}\\
\nonumber
\leq&&\sup_{u,v\geq1}u^jv^ke^{-\frac{\lambda}{L'}\omega(u+v)}e^\lambda
\leq e^\lambda\sup_{u,v\geq1}(u+v)^{j+k}e^{-\frac{\lambda}{L'}\omega(u+v)}\\
\label{19}
\leq&&e^\lambda\sup_{\sigma\geq0}e^{(j+k)\sigma-\frac{\lambda}{L'}\varphi_\omega(\sigma)}
=e^\lambda e^{\frac{\lambda}{L'}\varphi^*_\omega\left(\frac{j+k}{\lambda/L'}\right)}.
\eeqs

Applying it to \eqref{18} we have that for every $\tilde{q}\in\N$ there exists
$C_{\tilde{q}}>0$ such that for all $\ell\leq k$:
\beqs
\label{20}
k^{(k-\ell)\gamma_P\mu}\sum_{|\beta|=\ell}\|P^{\beta\mu}u\|_{L^2(F)}
\leq C_{\tilde{q}}e^{\tilde{q}\varphi^*_\omega\left(\frac{k\mu ms}{\tilde{q}}\right)}.
\eeqs

Substituting in \eqref{212} we obtain, for some constant $A>0$:
\beqs
\nonumber
\|Q^\alpha u\|_{L^2(F_{(M+1)\delta})}\leq&&A^kC_{\tilde{q}}
e^{\tilde{q}\varphi^*_\omega\left(\frac{(k+M)\mu ms}{\tilde{q}}\right)}
\leq A^kC_{\tilde{q}}
e^{\frac{\tilde{q}}{2}\varphi^*_\omega\left(\frac{2k\mu ms}{\tilde{q}}\right)}
e^{\frac{\tilde{q}}{2}\varphi^*_\omega\left(\frac{2M\mu ms}{\tilde{q}}\right)}\\
\label{21}
\leq&&C'_{\tilde{q}}A^{\mu msk}
e^{\frac{\tilde{q}}{2}\varphi^*_\omega\left(\frac{k\mu ms}{\tilde{q}/2}\right)}
\leq D_q
e^{q\varphi^*_\omega\left(\frac{k\mu ms}{q}\right)}
\eeqs
by the convexity of $\varphi^*_\omega$ and by \eqref{la5}, for
$q=\tilde{q}/(2\tilde{L}^{[\log A+1]})$.

Since $k\leq|\alpha|/\nu$ by construction, from \eqref{21} we thus have that
for every $q\in\N$ there exists $D_q>0$ such that
\beqsn
\|Q^\alpha u\|_{L^2(K)}\leq\|Q^\alpha u\|_{L^2(F_{(M+1)\delta})}
\leq D_q
e^{q\varphi^*_\omega\left(\frac{|\alpha|\mu ms}{\nu q}\right)}
=D_qe^{q\varphi^*_{\sigma'}\left(\frac{|\alpha|r}{q}\right)},
\qquad\forall\alpha\in\N_0^M,
\eeqsn
by \eqref{lemma23}, since $\sigma'(t)=\omega(t^{\frac{r}{smh}})$.
This proves that $u\in\E^Q_{(\sigma')}(\Omega)$.

\underline{Roumieu case:}

It is similar to the Beurling case: in \eqref{15}  we take 
$\frac 1q\varphi^*_{\omega'}(\ell mq)$
instead of $q\varphi^*_{\omega'}\left({\ell m}/{q}\right)$ and
a fixed constant $C$ instead of $C_q$, and similarly later on for $q',q'',\ldots$.

The proof is complete.
\end{proof}

\begin{Cor}
\label{cor0}
Let $P=(P_j(D))_{j=1}^N$ and $Q=(Q_j(D))_{j=1}^M$ be two systems of order $m$
satisfying condition $(\condH)$ and 1-equally strong.
Let $\Omega$ be an open subset of $\R^n$.
Let $\omega$ be a non-quasianalytic weight function and set
$\omega'(t)=\omega(t^{1/s})$ for $s\geq\gamma_P/m=\gamma_Q/m$.
Then
\beqsn
\E^P_{(\omega')}(\Omega)=\E^Q_{(\omega')}(\Omega)\qquad\mbox{and}\qquad
\E^P_{\{\omega'\}}(\Omega)=\E^Q_{\{\omega'\}}(\Omega).
\eeqsn
\end{Cor}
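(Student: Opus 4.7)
The plan is to derive Corollary~\ref{cor0} directly from Theorem~\ref{th1} by applying it twice, with the roles of $P$ and $Q$ interchanged. The hypothesis $P\approx_1 Q$ unfolds as the conjunction $Q\prec_1 P$ and $P\prec_1 Q$, which is exactly what is needed to invoke Theorem~\ref{th1} in each direction with exponent $h=1$. Since both systems are assumed to satisfy condition $(\condH)$ and to have the same order $m$, Remark~\ref{remlemma38} additionally gives $\gamma_P=\gamma_Q$, so the admissible range $s\geq \gamma_P/m=\gamma_Q/m$ is genuinely symmetric, and the same weight $\omega'(t)=\omega(t^{1/s})$ can be used in both applications.

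First I would apply Theorem~\ref{th1} to the ordered pair $(P,Q)$, with $h=1$ and $r=m$. The theorem's target weight is then
\[
\sigma'(t)=\omega'\bigl(t^{r/(mh)}\bigr)=\omega'\bigl(t^{m/m}\bigr)=\omega'(t),
\]
so inclusions \eqref{B1} and \eqref{R1} specialize to
\[
\E^P_{(\omega')}(\Omega)\subseteq\E^Q_{(\omega')}(\Omega),\qquad
\E^P_{\{\omega'\}}(\Omega)\subseteq\E^Q_{\{\omega'\}}(\Omega).
\]
Then I would apply the theorem again to the pair $(Q,P)$, using the reverse relation $P\prec_1 Q$. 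Exactly the same computation—still with $h=1$, $r=m$, and the condition on $s$ unchanged because $\gamma_P=\gamma_Q$—yields the reverse inclusions
\[
\E^Q_{(\omega')}(\Omega)\subseteq\E^P_{(\omega')}(\Omega),\qquad
\E^Q_{\{\omega'\}}(\Omega)\subseteq\E^P_{\{\omega'\}}(\Omega).
\]
Combining the two pairs of inclusions gives the two equalities asserted by the corollary.

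There is no substantial obstacle: the only thing to verify carefully is that Theorem~\ref{th1} really applies symmetrically, which amounts to noting that (i) the assumption $P\approx_1 Q$ provides the $h$-weaker condition in both directions with the same $h=1$, (ii) the assumption that both systems satisfy $(\condH)$ ensures the theorem's hypotheses hold in either ordering, and (iii) the forced equality $\gamma_P=\gamma_Q$ ensures the admissible $s$ is the same for both applications, so the target weight collapses to $\omega'$ in each case.
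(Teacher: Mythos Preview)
Your argument is correct and is exactly the intended one: the paper states Corollary~\ref{cor0} immediately after Theorem~\ref{th1} without giving a separate proof, since it follows by applying the theorem to $(P,Q)$ and to $(Q,P)$ with $h=1$ and $r=m$, using $\gamma_P=\gamma_Q$ from Remark~\ref{remlemma38} to ensure the same range of $s$ works in both directions.
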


From Remark~\ref{rem1} we obtain the Theorem of the Iterates as a
corollary of
Theorem~\ref{th1}:
\begin{Cor}
\label{cor1}
Let $P=(P_j(D))_{j=1}^N$ be an elliptic system of order $m$.
Let $\Omega$ be an open subset of $\R^n$ and
$\omega$ a non-quasianalytic weight function.
Then
\beqs
\label{desiredeq}
\E^P_{(\omega)}(\Omega)=\E_{(\omega)}(\Omega)\qquad\mbox{and}\qquad
\E^P_{\{\omega\}}(\Omega)=\E_{\{\omega\}}(\Omega).
\eeqs
\end{Cor}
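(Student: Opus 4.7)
The plan is to derive Corollary~\ref{cor1} from two applications of Theorem~\ref{th1}, using the gradient system $Q_0:=(D_j)_{j=1}^n$ of order $1$ as an auxiliary comparison system. Remark~\ref{rem210bis} already records the identification $\E^{Q_0}_{\omega}(\Omega)=\E_{\omega}(\Omega)$, so it suffices to prove both inclusions between $\E^P_{\omega}(\Omega)$ and $\E^{Q_0}_{\omega}(\Omega)$. Since $P$ is elliptic of order $m$, Remark~\ref{rem1} supplies condition $(\condH)$ for $P$ with $\gamma_P=m$; analogously $Q_0$ is trivially elliptic of order $1$, so it satisfies $(\condH)$ with $\gamma_{Q_0}=1$. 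Throughout I use the convention $|\xi|=\sum_j|\xi_j|$ from Definition~\ref{def1}, so that $\sum_{j=1}^n|Q_{0,j}(\xi)|=|\xi|$.

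For the inclusion $\E^P_{\omega}(\Omega)\subseteq\E_{\omega}(\Omega)$, I would apply Theorem~\ref{th1} with $Q=Q_0$ (so $r=1$). The ellipticity inequality $|\xi|^m\leq C(1+\sum_j|P_j(\xi)|)$ rearranges to
\[
\sum_{j=1}^n|Q_{0,j}(\xi)|=|\xi|\leq C^{1/m}\Bigl(1+\sum_{j=1}^N|P_j(\xi)|\Bigr)^{1/m},
\]
giving $Q_0\prec_h P$ with $h=1/m$. Taking the smallest admissible $s=\gamma_P/m=1$ produces $\omega'(t)=\omega(t)$ and
\[
\sigma'(t)=\omega\bigl(t^{r/(smh)}\bigr)=\omega\bigl(t^{1/(1\cdot m\cdot (1/m))}\bigr)=\omega(t),
\]
so Theorem~\ref{th1} delivers $\E^P_{\omega}(\Omega)\subseteq\E^{Q_0}_{\omega}(\Omega)=\E_{\omega}(\Omega)$ in both the Beurling and the Roumieu setting.

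For the reverse inclusion I would swap the roles and apply Theorem~\ref{th1} with $Q_0$ in the role of $P$ (order $1$) and with the original $P$ in the role of $Q$ (order $r=m$). The standard polynomial bound $|P_j(\xi)|\leq C_j(1+|\xi|)^m$, valid because each $P_j$ has degree at most $m$, sums to
\[
\sum_{j=1}^N|P_j(\xi)|\leq C\bigl(1+|\xi|\bigr)^m=C\Bigl(1+\sum_{j=1}^n|Q_{0,j}(\xi)|\Bigr)^m,
\]
so $P\prec_h Q_0$ with $h=m$. With the choice $s=\gamma_{Q_0}/1=1$ one again obtains $\omega'(t)=\omega(t)$ and $\sigma'(t)=\omega\bigl(t^{m/(1\cdot 1\cdot m)}\bigr)=\omega(t)$, and Theorem~\ref{th1} yields $\E_{\omega}(\Omega)=\E^{Q_0}_{\omega}(\Omega)\subseteq\E^P_{\omega}(\Omega)$. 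Combining the two inclusions gives \eqref{desiredeq}.

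I do not anticipate any genuine obstacle: the argument is a verification that the hypotheses of Theorem~\ref{th1} transfer correctly to both pairings of $P$ with $Q_0$. The only delicate point is the bookkeeping that forces $\sigma'=\omega'=\omega$ on both applications, which hinges on the reciprocal pairing of the exponents ($h=1/m$ in one direction, $h=m$ in the other) matching the reciprocal pairing of the orders ($m$ for $P$, $1$ for $Q_0$), combined with the fact that ellipticity pins $\gamma_P$ down to $m$ so that the minimal admissible dilation $s=\gamma_P/m$ equals $1$ and no loss of regularity is introduced.
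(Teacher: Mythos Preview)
Your proof is correct and follows essentially the same route as the paper's own argument: both compare $P$ with the gradient system $Q_0=(D_j)_{j=1}^n$, verify $(\condH)$ with $\gamma_P=m$ and $\gamma_{Q_0}=1$, establish $Q_0\prec_{1/m}P$ from ellipticity and $P\prec_m Q_0$ from the polynomial degree bound, and then apply Theorem~\ref{th1} in each direction with $s=1$ so that $\sigma'=\omega'=\omega$. Your write-up is slightly more explicit about $\gamma_{Q_0}=1$ and about the exponent bookkeeping, but the structure and the ingredients are the same.
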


\begin{proof}
\underline{Beurling case:}

Let us first prove the inclusion
\beqs
\label{dadim}
\E^P_{(\omega)}(\Omega)\subseteq\E_{(\omega)}(\Omega).
\eeqs
To this aim we consider the system $Q=(D_j)_{j=1}^n$, for $D_j=-i\partial_{x_j}$.
The operators $Q_j(D)=D_j$ are not hypoelliptic, but the system $Q$
satisfies conditon $(\condH)$.
The system $P$ satisfies \eqref{ellittico} and hence condition $(\condH)$
for $\gamma_P=m$, by 
Remark~\ref{rem1}.

Since \eqref{ellittico} implies that $Q$ is $\frac 1m$-weaker than $P$,
from Theorem~\ref{th1}, with $s=1=\gamma_P/m$ and hence $\omega'(t)=\omega(t)$,
we have that
\beqs
\label{30}
\E^P_{(\omega)}(\Omega)\subseteq\E^Q_{(\sigma)}(\Omega)=
\E_{(\sigma)}(\Omega),
\eeqs
for $\sigma(t)=\omega(t^{{1}/(m\cdot\frac 1m)})=\omega(t)$, and 
hence \eqref{dadim} is proved.

Vice versa, since every $P_j(\xi)$ is a polynomial of degree $m$, we clearly
have that $P$ is $m$-weaker than $Q$ and, from Theorem~\ref{th1},
\beqsn
\E_{(\omega)}(\Omega)=\E^Q_{(\omega)}(\Omega)\subseteq\E^P_{(\sigma)}(\Omega)
\eeqsn
for $\sigma(t)=\omega\left(t^{\frac{m}{1\cdot m}}\right)=\omega(t)$, so that
also the opposite inclusion
\beqsn
\E_{(\omega)}(\Omega)\subseteq\E^P_{(\omega)}(\Omega)
\eeqsn
is valid, and hence the equality \eqref{desiredeq} is proved in the 
Beurling case.

\underline{Roumieu case:}

The proof is the same as in the Beurling case, using \eqref{R1} instead of
\eqref{B1}.
\end{proof}

\begin{Ex}
\label{exA}
\begin{em}
Let us consider in $\R^2$ the system $P=(P_j(D))_{j=1}^2$ defined by
\beqsn
P_1(D_1,D_2)=D_1^2,\qquad\qquad P_2(D_1,D_2)=D_2^2.
\eeqsn
These operators are not hypoelliptic but the system $P$ satisfies conditon
$(\condH)$ for $\gamma_P=2$.

Let us then condider $Q=Q(D)=\Delta=-D_1^2-D_2^2$.
This is an elliptic operator of order 2 and hence satisfies condition
$(\condH)$ for $\gamma_Q=2$ (see Remark~\ref{rem1}).

Moreover, $P$ and $Q$ are 1-equally strong and $\gamma_P/m=1$. We can then
apply Corollaries~\ref{cor0} and \ref{cor1} with
$\omega'(t)=\omega(t)$ and obtain that, for any open subset $\Omega$ of $\R^2$
and for every non-quasianalytic weight function $\omega$:
\beqsn
\E^P_{(\omega)}(\Omega)=\E^Q_{(\omega)}(\Omega)=\E_{(\omega)}(\Omega).
\eeqsn

This means that the elements $u\in\E_{(\omega)}(\Omega)$ can be equivalently
determined by estimating their derivatives
$D^\alpha u(x)=D_1^{\alpha_1}D_2^{\alpha_2}u(x)$, or the iterates of
$Q(D)$, i.e. $\Delta^\beta u(x)$, or the iterates of the system
$P=(P_j(D))_{j=1}^2$, i.e.
$P^\gamma u(x)=D_1^{2\gamma_1}D_2^{2\gamma_2}u(x)$,
for $\alpha,\gamma\in\N_0^2$, $\beta\in\N_0$.

The same holds also in the Roumieu case.
\end{em}
\end{Ex}

\section{A necessary condition}
\label{sec3}

In order to obtain a necessary condition for the inclusions
\eqref{B1} or \eqref{R1}, we first need to introduce the following:
\begin{Def}
We say that a non-quasianalytic weight function $\omega$ satisfies the
growth {\em condition B-M-M} if there exists a constant $H\geq1$ such that
\beqs
\label{BMM}
2\omega(t)\leq\omega(Ht)+H,\qquad\forall t\geq0.
\eeqs
\end{Def}

\begin{Rem}
\begin{em}
Condition B-M-M was introduced in \cite{BMM} in order to characterize 
those weight functions $\omega$ for which $\E_{(\omega)}(\Omega)$
(or $\E_{\{\omega\}}(\Omega)$) can be also considered as a Denjoy-Carleman
class $\E_{(M_p)}(\Omega)$ (or $\E_{\{M_p\}}(\Omega)$, respectively) as defined
in \cite{K}, for some sequence $\{M_p\}$.

Gevrey weights satisfy condition B-M-M.
\end{em}
\end{Rem}

Let us now prove that the condition $Q\prec_h P$ of Theorem~\ref{th1}
is also necessary for the inclusions \eqref{B1} and \eqref{R1}.

To this aim we first recall, from \cite[Lemma 4.7]{J}, the following:
\begin{Lemma}
\label{lemmaJ}
For all $h,\lambda>0$ and $t\geq1$ we have that:
\begin{itemize}
\item[(i)]
$\ds\sup_{j\in\N_0}t^j
\exp\left\{-\lambda\varphi^*\left(\frac{hj}{\lambda}\right)\right\}\leq
\exp\left\{\lambda\omega(t^{1/h})\right\}$;
\item[(ii)]
$\ds\sup_{j\in\N_0}t^j
\exp\left\{-\lambda\varphi^*\left(\frac{hj}{\lambda}\right)\right\}\geq
\frac1t\exp\left\{\lambda\omega(t^{1/h})\right\}$.
\end{itemize}
\end{Lemma}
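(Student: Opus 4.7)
The plan is to reduce both estimates to Young duality, exploiting the biduality $(\varphi^*)^*=\varphi$ together with the identity $\varphi(r)=\omega(e^r)$. Throughout, set $r:=(\log t)/h\geq 0$ (which uses the hypothesis $t\geq 1$), so that $\omega(t^{1/h})=\varphi(r)$.

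For (i) I would simply invoke Young's inequality $rs\leq\varphi(r)+\varphi^*(s)$, which is immediate from the definition of $\varphi^*$, with the choice $s=hj/\lambda$. Since then $rs=j(\log t)/\lambda$, multiplying through by $\lambda$ and exponentiating gives $t^j\leq\exp\{\lambda\varphi^*(hj/\lambda)\}\exp\{\lambda\omega(t^{1/h})\}$ for every $j\in\N_0$; rearranging and taking the supremum over $j$ yields (i).

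For (ii) the key observation is that, by biduality together with the change of variable $s=hj/\lambda$,
\[
\lambda\varphi(r)=\sup_{j\in[0,\infty)}\{j\log t-\lambda\varphi^*(hj/\lambda)\},
\]
so I only need to control the loss incurred when passing from this continuous supremum to the discrete one over $j\in\N_0$. I would pick $j^*\geq 0$ realising the continuous supremum (it exists because $\varphi^*$ is convex, nondecreasing, and $\varphi^*(s)/s\to+\infty$ as $s\to+\infty$, so the concave map $s\mapsto rs-\varphi^*(s)$ is upper semicontinuous and tends to $-\infty$) and set $j_0:=\lfloor j^*\rfloor\in\N_0$. Monotonicity of $\varphi^*$ (since $hj_0/\lambda\leq hj^*/\lambda$) together with $j_0\geq j^*-1$ yields
\[
j_0\log t-\lambda\varphi^*(hj_0/\lambda)\geq(j^*-1)\log t-\lambda\varphi^*(hj^*/\lambda)=\lambda\varphi(r)-\log t;
\]
exponentiating gives $t^{j_0}\exp\{-\lambda\varphi^*(hj_0/\lambda)\}\geq(1/t)\exp\{\lambda\omega(t^{1/h})\}$, which bounds the desired discrete supremum from below.

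The main technical point is the attainment of the continuous maximiser $j^*$, together with the mild degenerate case $j^*<1$ that forces $j_0=0$; the latter is handled by noting that in that range $\lambda\varphi(r)\leq j^*\log t\leq\log t$, so the trivial value $t^0e^{-\lambda\varphi^*(0)}=1$ at $j=0$ already dominates $(1/t)\exp\{\lambda\omega(t^{1/h})\}$. If one prefers to avoid any attainment argument, the estimate follows equally well by choosing an $\varepsilon$-maximiser $j^*_\varepsilon$ and sending $\varepsilon\to 0^+$ at the end.
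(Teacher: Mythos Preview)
Your argument is correct. Both parts are handled via the Young/Legendre duality $\varphi^*(s)\ge rs-\varphi(r)$ and $(\varphi^*)^*=\varphi$, together with the identification $\varphi((\log t)/h)=\omega(t^{1/h})$; the discretisation step in (ii) via $j_0=\lfloor j^*\rfloor$, monotonicity of $\varphi^*$, and $j_0\ge j^*-1$ is clean, and the separate treatment of $j^*<1$ is in fact already covered by the main inequality (since $j_0=0\ge j^*-1$ and $\varphi^*(0)=0\le\varphi^*(hj^*/\lambda)$ still hold), so it could be dropped.

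As for comparison: the paper does not give its own proof of this lemma but merely recalls it from \cite[Lemma~4.7]{J}, so there is no in-paper argument to compare against. Your proof is the natural one and matches the standard approach used for such estimates in the Braun--Meise--Taylor framework.
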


We can then prove:
\begin{Th}
\label{th2}
Let $\Omega$ be an open subset of $\R^n$ and $\omega$ a 
non-quasianalytic weight function satisfying condition B-M-M. Let
$P=(P_j(D))_{j=1}^N$ be a system of linear partial differential operators
of order $m$ with constant coefficients satisfying condition
$(\condC)$ of Definition~\ref{defC} and let $Q=(Q_j(D))_{j=1}^M$ 
be a generic system of linear
partial differential operators
of order $r$ with constant coefficients.

If there exists $h>0$ such that one of the following inclusions
\beqs
\label{B2}
\E^P_{(\omega)}(\Omega)\subseteq\E^Q_{(\sigma)}(\Omega)
\eeqs
or
\beqs
\label{R2}
\E^P_{\{\omega\}}(\Omega)\subseteq\E^Q_{\{\sigma\}}(\Omega)
\eeqs
holds, for $\sigma(t)=\omega\left(t^{\frac{r}{mh}}\right)$, then $Q$ is
$h$-weaker than $P$.
\end{Th}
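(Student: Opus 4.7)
The plan is to exploit the continuity of the inclusion \eqref{B2} (resp.\ \eqref{R2}), granted by the closed graph theorem in view of Remarks~\ref{rem210bis} and \ref{rem210tris}, and to test it on the plane waves $u_\xi(x):=e^{i\langle x,\xi\rangle}$, $\xi\in\R^n$. These satisfy $P^\beta u_\xi=P(\xi)^\beta u_\xi$ and $Q^\alpha u_\xi=Q(\xi)^\alpha u_\xi$ with $P(\xi)^\beta:=\prod_{j=1}^N P_j(\xi)^{\beta_j}$ and $Q(\xi)^\alpha:=\prod_{j=1}^M Q_j(\xi)^{\alpha_j}$, so the relevant norms reduce to purely algebraic expressions in $\xi$. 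In the Beurling case, continuity of the inclusion translates into the seminorm estimate: for every $K'\subset\subset\Omega$ and $\mu>0$ there exist $K\subset\subset\Omega$ and $\lambda,C>0$ with
\[
p^Q_{K',\mu}(u)\leq C\,p^P_{K,\lambda}(u),\qquad\forall u\in\E^P_{(\omega)}(\Omega),
\]
together with the analogous Roumieu statement.

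Feeding $u_\xi$ into this inequality, Lemma~\ref{lemmaJ}(i) (weight $\omega$, $h=m$, $t=\max_j|P_j(\xi)|$) gives
\[
p^P_{K,\lambda}(u_\xi)\leq |K|^{1/2}\exp\bigl\{\lambda\,\omega\bigl((1+\textstyle\sum_{j=1}^N|P_j(\xi)|)^{1/m}\bigr)\bigr\},
\]
whereas choosing $j_0=j_0(\xi)$ with $|Q_{j_0}(\xi)|=\max_j|Q_j(\xi)|$, restricting the sup defining $p^Q_{K',\mu}(u_\xi)$ to multi-indices of the form $\alpha=se_{j_0}$, $s\in\N_0$, and applying Lemma~\ref{lemmaJ}(ii) (weight $\sigma$, $h=r$) together with $\sigma(t^{1/r})=\omega(t^{1/(mh)})$ (coming from $\sigma(t)=\omega(t^{r/(mh)})$), yields for $|Q_{j_0}(\xi)|\geq 1$ the lower bound
\[
p^Q_{K',\mu}(u_\xi)\geq\frac{|K'|^{1/2}}{|Q_{j_0}(\xi)|}\exp\bigl\{\mu\,\omega\bigl(|Q_{j_0}(\xi)|^{1/(mh)}\bigr)\bigr\}.
\]

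Combining the two bounds, taking logarithms, and using condition~$(\gamma)$ in the form $\log|Q_{j_0}(\xi)|\leq mh\varepsilon\,\omega(|Q_{j_0}(\xi)|^{1/(mh)})+mhC_\varepsilon$ with $\varepsilon=\mu/(2mh)$ absorbs the log-term and produces constants $c_1,c_2>0$ independent of $\xi$ with
\[
\omega\bigl(|Q_{j_0}(\xi)|^{1/(mh)}\bigr)\leq c_1\,\omega\bigl((1+\textstyle\sum_j|P_j(\xi)|)^{1/m}\bigr)+c_2,\qquad\forall\xi\in\R^n.
\]
The last step is a B-M-M bootstrap: iterating \eqref{BMM} gives $\omega(H^kt)\geq 2^k(\omega(t)-H)+H$, so if $|Q_{j_0}(\xi)|^{1/(mh)}>H^k(1+\sum_j|P_j(\xi)|)^{1/m}$ for a fixed $k$ with $2^k>c_1$, the previous inequality forces $\omega((1+\sum_j|P_j(\xi)|)^{1/m})$, and hence $|Q_{j_0}(\xi)|^{1/(mh)}$, to be bounded by an absolute constant. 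Therefore $|Q_{j_0}(\xi)|^{1/(mh)}\leq H^k(1+\sum_j|P_j(\xi)|)^{1/m}+C_0$ holds for every $\xi$; raising to the $mh$-th power and using $\sum_j|Q_j(\xi)|\leq M|Q_{j_0}(\xi)|$ gives $Q\prec_h P$.

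The main obstacle will be the Roumieu case: for Beurling, Remark~\ref{rem210bis} makes $\E^P_{(\omega)}(\Omega)$ Fr\'echet and the Fr\'echet closed graph theorem applies at once, while the Roumieu spaces are projective limits of LB-spaces (complete only under the finer topology of Remark~\ref{rem210tris}) and extracting the uniform $\mu$ and $\lambda$ needed in the seminorm inequality requires the appropriate (PLB)-type closed-graph/factorisation theorem. Once this continuity statement is in hand, the plane-wave test and the B-M-M bootstrap go through essentially verbatim in both settings.
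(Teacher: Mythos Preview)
Your Beurling argument is correct and matches the paper's approach: closed graph theorem, plane-wave test, Lemma~\ref{lemmaJ}, condition~$(\gamma)$ to absorb the polynomial factor, and the B-M-M iteration to invert the weight inequality. The cosmetic differences (you use $\max_j|Q_j(\xi)|$ via $\alpha=se_{j_0}$, the paper uses $\sum_j|Q_j(\xi)/2|$ via the multinomial expansion; you use $t=\max_j|P_j(\xi)|$, the paper uses $\sum_j|P_j(\xi)|$) are immaterial.

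Where you diverge is the Roumieu case, and here the paper has a trick you missed that removes the ``main obstacle'' entirely. Rather than seeking a closed-graph/factorisation theorem for maps out of the (PLB)-space $\E^P_{\{\omega\}}(\Omega)$, the paper simply precomposes with the automatic inclusion $\E^P_{(\omega)}(\Omega)\subseteq\E^P_{\{\omega\}}(\Omega)$, obtaining
\[
\E^P_{(\omega)}(\Omega)\;\subseteq\;\E^P_{\{\omega\}}(\Omega)\;\subseteq\;\E^Q_{\{\sigma\}}(\Omega)\;\subseteq\;\ind_{\ell\to\infty}\E^Q_{\sigma,1/\ell}(K_0).
\]
Now the source is Fr\'echet (by condition~$(\condC)$ on $P$) and the target is an (LF)-space, so the ordinary Closed Graph Theorem together with Grothendieck's Factorisation Theorem (\cite[Thms~24.31, 24.33]{MV}) yields a fixed $\ell_0$ with $\E^P_{(\omega)}(\Omega)\hookrightarrow\E^Q_{\sigma,1/\ell_0}(K_0)$ continuously. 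Since the plane waves $u_\xi$ lie in $\E^P_{(\omega)}(\Omega)$, this already gives the seminorm inequality you need, and the rest of your argument applies verbatim. No (PLB)-machinery is required.
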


\begin{proof}
\underline{Roumieu case:}

We follow the same ideas of Juan-Huguet in \cite{J}, 
substituting to the assumption, in \cite[Thm. 4.5]{J}, that
the single operator $P(D)$ is hypoelliptic, the weaker
assumption that the system
$P$ satisfies condition $(\condC)$, in the spirit of \cite{BC}.

Let us then assume \eqref{R2} to be satisfied and fix a compact set 
$K_0\subset\Omega$.

We have the following inclusions:
\beqsn
\E^P_{(\omega)}(\Omega)\subseteq\E^P_{\{\omega\}}(\Omega)
\subseteq\E^Q_{\{\sigma\}}(\Omega)
=\proj_{\afrac{\longleftarrow}{\K\subset\subset\Omega}}
\ind_{\afrac{\longrightarrow}{\ell\in\N}}\E^Q_{\sigma,\frac1\ell}(K)
\subseteq
\ind_{\afrac{\longrightarrow}{\ell\in\N}}\E^Q_{\sigma,\frac1\ell}(K_0).
\eeqsn

By assumption the system $P$ satisfies condition $(\condC)$ and hence, 
by Remark~\ref{rem210bis},
$\E^P_{(\omega)}(\Omega)$ is a 
Fr\'echet space and
$\ds\ind_{\afrac{\longrightarrow}{\ell\in\N}}\E^Q_{\sigma,\frac1\ell}(K_0)$ is an 
(LF)-space.
We can therefore apply the Closed Graph Theorem and Grothendieck's
Factorization Theorem (see \cite[Thms 24.31 and 24.33]{MV}) and obtain that
there exists $\ell_0\in\N$ such that
\beqsn
\E^P_{(\omega)}(\Omega)\subseteq\E^Q_{\sigma,\frac{1}{\ell_0}}(K_0)
\eeqsn
with a continuous inclusion.

There exist then a constant $C>0$, a compact
$K\subset\subset\Omega$ and $\lambda>0$ such that, 
for all $f\in\E^P_{(\omega)}(\Omega)$:
\beqs
\label{40}
\sup_{\beta\in\N_0^M}\|Q^\beta(D)f\|_{L^2(K_0)}
e^{-\frac{1}{\ell_0}\varphi^*_\sigma(|\beta|r\ell_0)}
\leq C\sup_{\alpha\in\N_0^N}\|P^\alpha(D)f\|_{L^2(K)}
e^{-\lambda\varphi^*_\omega\left(\frac{|\alpha|m}{\lambda}\right)}.
\eeqs

For $\xi\in\R^n$, we denote $f_\xi(x):=e^{i\langle x,\xi\rangle}$ and remark
that
 $f_\xi\in\E^P_{(\omega)}(\Omega)$, because for every compact 
$K\subset\subset\Omega$ and $\lambda>0$
\beqsn
\|P^\alpha(D)f_\xi\|_{L^2(K)}=\|P^\alpha(\xi)f_\xi\|_{L^2(K)}
\leq m(K)|P^\alpha(\xi)|\leq C(1+|\xi|^{m|\alpha|})
\leq C_\xi e^{\lambda'\varphi^*_\omega\left(\frac{|\alpha|m}{\lambda'}\right)}
\eeqsn
for some $C_\xi>0$ and $\lambda'>0$, by \eqref{la5}.
Since $f_\xi\in\E^P_{(\omega)}(\Omega)$ and we can
apply \eqref{40} to $f_\xi$, obtaining that
\beqsn
\sup_{\beta\in\N_0^M}|Q^\beta(\xi)|e^{-\frac{1}{\ell_0}\varphi^*_\sigma(|\beta|r\ell_0)}
\leq C'\sup_{\alpha\in\N_0^N}|P^\alpha(\xi)|
e^{-\lambda\varphi^*_\omega\left(\frac{|\alpha|m}{\lambda}\right)}
\eeqsn
for some $C'>0$.

Therefore
\beqs
\nonumber
&&\sup_{\beta\in\N_0^M}
\bigg(\sum_{j=1}^M\left|\frac{Q_j}{2}(\xi)\right|\bigg)^{|\beta|}
e^{-\frac{1}{\ell_0}\varphi^*_\sigma(|\beta|r\ell_0)}\\
\nonumber
\leq&&\sup_{\beta\in\N_0^M}
\left(\sum_{\beta_1+\ldots+\beta_N=|\beta|}\frac{|\beta|!}{\beta_1!\cdots\beta_N!}
|Q_1(\xi)|^{\beta_1}\cdots|Q_M(\xi)|^{\beta_M}\frac{1}{2^{|\beta|}}
e^{-\frac{1}{\ell_0}\varphi^*_\sigma(|\beta|r\ell_0)}\right)\\
\nonumber
\leq&&
\sup_{\beta\in\N_0^M}|Q^\beta(\xi)|e^{-\frac{1}{\ell_0}\varphi^*_\sigma(|\beta|r\ell_0)}
\leq C'\sup_{\alpha\in\N_0^N}|P^\alpha(\xi)|
e^{-\lambda\varphi^*_\omega\left(\frac{|\alpha|m}{\lambda}\right)}\\
\label{44}
\leq&&C'' \sup_{\alpha\in\N_0^N}\bigg(\sum_{j=1}^N|P_j(\xi)|\bigg)^{|\alpha|}
e^{-\lambda\varphi^*_\omega\left(\frac{|\alpha|m}{\lambda}\right)}.
\eeqs

From Lemma~\ref{lemmaJ} it follows that, if
$\sum_{j=1}^M|\frac{Q_j}{2}(\xi)|\geq1$ and $\sum_{j=1}^N|P_j(\xi)|\geq1$, then 
\beqs
\label{41}
\hspace*{7mm}
\bigg(\sum_{j=1}^M\bigg|\frac{Q_j}{2}(\xi)\bigg|\bigg)^{-1}\!\!
\exp\left\{\frac{1}{\ell_0}\sigma\left(\bigg(\sum_{j=1}^M\bigg|\frac{Q_j}{2}(\xi)\bigg|
\bigg)^\frac1r\right)\right\}
\leq\tilde{C}\exp\left\{\lambda\omega\left(\bigg(\sum_{j=1}^N|P_j(\xi)|
\bigg)^{\frac1m}\right)\right\},
\eeqs
for some $\tilde{C}>0$.

From property $(\gamma)$ of the weight function $\sigma(t)$ we have that
\eqref{41} implies, for some $\lambda'>0$, if
$\sum_{j=1}^M|\frac{Q_j}{2}(\xi)|\geq1$ and $\sum_{j=1}^N|P_j(\xi)|\geq1$:
\beqsn
\exp\left\{\lambda'\sigma\left(\bigg(\sum_{j=1}^M\bigg|\frac{Q_j}{2}(\xi)\bigg|\bigg)^\frac1r\right)
\right\}\leq\tilde{C}
\exp\left\{\lambda\omega\left(\bigg(\sum_{j=1}^N|P_j(\xi)|
\bigg)^\frac1m\right)\right\}.
\eeqsn 

Since $\sigma(t)=\omega\left(t^{\frac{r}{mh}}\right)$ by assumption,
we thus obtain:
\beqs
\nonumber
\omega\left(\bigg(\sum_{j=1}^M\bigg|\frac{Q_j}{2}(\xi)\bigg|\bigg)^{\frac{1}{mh}}\right)\leq&& A
\left(1+\omega\left(\bigg(\sum_{j=1}^N|P_j(\xi)|
\bigg)^{\frac1m}\right)\right)\\
\label{42}
\leq&&\omega\left(A'\bigg(\sum_{j=1}^N|P_j(\xi)|
\bigg)^{\frac1m}\right)
\eeqs
for some $A'>0$ if $\sum_{j=1}^N|P_j(\xi)|\geq1$ and
$\sum_{j=1}^M|\frac{Q_j}{2}(\xi)|\geq1$, because condition B-M-M implies that
for every $k\in\N$ there exists a constant $H_k\geq1$ such that 
$2^{k-1}\omega(t)\leq \omega(H_kt)$ for all $t\geq1$.

Since $\omega(t)$ is increasing, \eqref{42} implies that there exists a
constant $B>1$ such that
\beqs
\label{43}
\sum_{j=1}^M|Q_j(\xi)|\leq B\left(1+\sum_{j=1}^N|P_j(\xi)|\right)^h,
\eeqs
if $\sum_{j=1}^M|\frac{Q_j}{2}(\xi)|\geq1$ and $\sum_{j=1}^N|P_j(\xi)|\geq1$.

However, \eqref{43} is trivial if $\sum_{j=1}^M|\frac{Q_j}{2}(\xi)|\leq1$ or
$\sum_{j=1}^N|P_j(\xi)|\leq1$, so that  \eqref{43} is satisfied for all 
$\xi\in\R^n$ and $Q$ is $h$-weaker than $P$.

\underline{Beurling case:}

The proof is similar, but easier, as in the Roumieu case, since
$\E^P_{(\omega)}(\Omega)$ and $\E^Q_{(\sigma)}(\Omega)$ are metrizable,
and hence the inclusion \eqref{B2} implies
\eqref{40}.
\end{proof}

\begin{Rem}
\begin{em}
By Remark~\ref{rem210tris}, 
instead of condition $(\condC)$ we can consider, in Theorem~\ref{th2}, the
weaker assumption that $\E_{(\omega)}^P(\Omega)$ is a Fr\'echet space and then
take on $\E^Q_{\sigma,1/\ell}(K_0)$ the fundamental system of semi-norms
$\{\tau^Q_{K_0,\ell,m}\}_{m\in\N}$ defined by \eqref{defsemi}, to make
$\ds\ind_{\afrac{\longrightarrow}{\ell\in\N}}\E^Q_{\sigma,1/\ell}(K_0)$
an (LF)-space.
\end{em}
\end{Rem}

As a consequence of Theorem~\ref{th2} we have the converse of
Corollary~\ref{cor1}:
\begin{Cor}
\label{cor2}
Let $\Omega$ be an open subset of $\R^n$.
Let $\omega$ be a non-quasianalytic weight function satisfying condition
B-M-M, and let $P=(P_j(D))_{j=1}^N$ be a system of order $m$ satisfying
condition $(\condC)$.
If
\beqs
\label{45}
\E^P_{(\omega)}(\Omega)\subseteq\E_{(\omega)}(\Omega),
\eeqs
or
\beqs
\label{45bis}
\E^P_{\{\omega\}}(\Omega)\subseteq\E_{\{\omega\}}(\Omega),
\eeqs
then the system $P$ is elliptic.
\end{Cor}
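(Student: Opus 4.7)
The plan is to deduce this as a direct consequence of Theorem~\ref{th2}, applied with the comparison system taken to be $Q=(D_j)_{j=1}^n$, where $D_j=-i\partial_{x_j}$. First I would observe that $Q$ is a system of order $r=1$, that it trivially falls under the hypotheses placed on the ``generic'' system in Theorem~\ref{th2}, and that, as recalled in Remark~\ref{rem210bis} (quoting \cite[Prop.~4.9]{BMT}), one has the identification $\E_{(\omega)}^Q(\Omega)=\E_{(\omega)}(\Omega)$ and $\E_{\{\omega\}}^Q(\Omega)=\E_{\{\omega\}}(\Omega)$. Hence the hypothesis \eqref{45} (respectively \eqref{45bis}) takes the form $\E^P_{(\omega)}(\Omega)\subseteq\E^Q_{(\omega)}(\Omega)$ (respectively $\E^P_{\{\omega\}}(\Omega)\subseteq\E^Q_{\{\omega\}}(\Omega)$).

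Next I would choose the parameter $h$ for which Theorem~\ref{th2} is set up. The statement requires the target weight to be $\sigma(t)=\omega(t^{r/(mh)})$, and we want $\sigma(t)=\omega(t)$. With $r=1$, this forces $h=1/m$. Since $P$ satisfies condition $(\condC)$ by assumption and $\omega$ satisfies condition B-M-M, Theorem~\ref{th2} (in either the Beurling or the Roumieu version) applies and yields that $Q$ is $(1/m)$-weaker than $P$, i.e.\ there exists $C>0$ such that
\beqsn
|\xi|=\sum_{j=1}^n|\xi_j|=\sum_{j=1}^n|Q_j(\xi)|
\leq C\Bigl(1+\sum_{j=1}^N|P_j(\xi)|\Bigr)^{1/m},
\qquad\forall\xi\in\R^n.
\eeqsn

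Raising both sides to the $m$-th power, one obtains
\beqsn
|\xi|^m\leq C^m\Bigl(1+\sum_{j=1}^N|P_j(\xi)|\Bigr)^{m\cdot\frac{1}{m}\cdot m}
\leq C'\Bigl(1+\sum_{j=1}^N|P_j(\xi)|\Bigr)
\eeqsn
up to harmless adjustment of the constant (using that the right-hand side is at least $1$), which is precisely the ellipticity estimate \eqref{ellittico} for the system $P$. The Roumieu case is identical, using \eqref{R2} of Theorem~\ref{th2} in place of \eqref{B2}.

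No step here is a genuine obstacle: the entire argument is a careful bookkeeping of the parameters $r$, $h$ and the weight $\sigma$ so that Theorem~\ref{th2} applies and returns exactly the ellipticity inequality. The only point that requires a minimum of care is verifying that $Q=(D_j)_{j=1}^n$ is admissible in Theorem~\ref{th2} (which it is, since no extra assumption beyond ``generic system with constant coefficients of order $r$'' is placed on the target system) and that the identification $\E^Q_\omega(\Omega)=\E_\omega(\Omega)$ is legitimate, which was already noted in Remark~\ref{rem210bis}.
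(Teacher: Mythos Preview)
Your approach is exactly the one in the paper: apply Theorem~\ref{th2} with $Q=(D_j)_{j=1}^n$, $r=1$, $h=1/m$, so that $\sigma=\omega$, and read off the ellipticity inequality from ``$Q\prec_{1/m}P$''. The only blemish is the displayed exponent $m\cdot\tfrac{1}{m}\cdot m$: raising $(1+\sum_j|P_j(\xi)|)^{1/m}$ to the $m$-th power gives exponent $1$, not $m$, so the middle expression should already be $C^m\bigl(1+\sum_j|P_j(\xi)|\bigr)$ with no further ``adjustment'' needed.
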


\begin{proof}
\underline{Beurling case:}

Let us consider the system $Q=(D_j)_{j=1}^n$.
Then $\E^Q_{(\omega)}(\Omega)=\E_{(\omega)}(\Omega)$ and \eqref{45} implies
\eqref{B2} with $\sigma(t)=\omega(t)=\omega\left(t^{\frac{r}{mh}}\right)$
for $r=1$ and $h=1/m$.

By Theorem~\ref{th2} we have that $Q$ is $\frac1m$-weaker than $P$, i.e.
\beqsn
\sum_{j=1}^n|\xi_j|\leq C\bigg(1+\sum_{j=1}^N|P_j(\xi)|\bigg)^{\frac1m},
\qquad\forall\xi\in\R^n.
\eeqsn

This proves that the system $P$ is elliptic, and hence the corollary is
proved.

\underline{Roumieu case:}

The proof is similar as in the Beurling case, using \eqref{45bis} and
\eqref{R2} instead of \eqref{45} and \eqref{B2}.
\end{proof}

\end{document}